\newtheorem{theorem}{Theorem}
\newtheorem{proposition}[theorem]{Proposition}
\newtheorem{lemma}[theorem]{Lemma}
\theoremstyle{definition} 
\newtheorem{definition}[theorem]{Definition}
\theoremstyle{remark}
\newcommand{\Set}{\mathsf{Set}} 
\newcommand{\Cat}{\mathsf{Cat}}
\newcommand{\Cof}{\mathsf{Cof}}
\newcommand{\Lens}{\mathsf{Lens}}
\newcommand{\phibar}{\overline{\varphi}}
\DeclareMathOperator{\cod}{cod}
\begin{document}

\title{Delta lenses as coalgebras for a comonad}

\author{Bryce Clarke}

\address{Centre of Australian Category Theory\\
Macquarie University, NSW 2109, Australia}

\email{bryce.clarke1@hdr.mq.edu.au}

\thanks{The author is supported by the 
Australian Government Research Training Program Scholarship.}

\subjclass[2020]{18C15}

\keywords{delta lens, cofunctor, coalgebra, bidirectional transformation}



\begin{abstract}
Delta lenses are a kind of morphism between categories which are 
used to model bidirectional transformations between systems. 
Classical state-based lenses, also known as very well-behaved lenses, 
are both algebras for a monad and coalgebras for a comonad. 
Delta lenses generalise state-based lenses, and while delta lenses 
have been characterised as certain algebras for a semi-monad, 
it is natural to ask if they also arise as coalgebras. 

This short paper establishes that delta lenses are coalgebras 
for a comonad, through showing that the forgetful functor 
from the category of delta lenses over a base, to the category 
of cofunctors over a base, is comonadic. 
The proof utilises a diagrammatic approach to delta lenses, 
and clarifies several results in the literature concerning the 
relationship between delta lenses and cofunctors. 
Interestingly, while this work does not generalise the 
corresponding result for state-based lenses, it does provide new 
avenues for exploring lenses as coalgebras. 
\end{abstract}

\maketitle

\section{Introduction}
\label{sec:introduction}

The goal of understanding various kinds of lenses as mathematical 
structures has been an ongoing program in the study of bidirectional 
transformations. 
For example, \emph{very well-behaved lenses} \cite{FGMPS07}, 
also known as \emph{state-based lenses} \cite{AU17},
have been understood as both algebras for a monad \cite{JRW10} and 
coalgebras for a comonad \cite{Con11, GJ12}.
A generalisation of state-based lenses called \emph{category lenses} 
\cite{JRW12} were also introduced as algebras for a monad, 
based on classical work in $2$-category theory on split 
opfibrations \cite{Str74}. 
Another kind of lens between categories called a \emph{delta lens} 
\cite{DXC11} was shown to be a certain algebra for a semi-monad \cite{JR13},
however it remained open as to whether delta lenses could also be 
characterised as (co)algebras for a (co)monad. 

The purpose of this short paper is to characterise delta lenses as 
coalgebras for a comonad (Theorem~\ref{thm:main}). 
The proof of this simple result builds upon and clarifies several recent 
advances in the theory of delta lenses. 

In 2017, Ahman and Uustalu introduced \emph{update-update lenses} 
\cite{AU17} as morphisms of \emph{directed containers} \cite{ACU14}, 
which are equivalent to certain morphisms called
\emph{cofunctors} between categories \cite{Agu97}.
In the same paper, they show explicitly how, using the notation of 
directed containers, delta lenses may be understood as cofunctors with 
additional structure. 

In earlier work \cite{AU16} from 2016, Ahman and Uustalu also provide a 
construction on morphisms of directed containers which yields a 
\emph{split pre-opcleavage} for a functor; in other words, they 
show how cofunctors may be turned into delta lenses. 
We show that this construction is actually a right adjoint to the 
forgetful functor from delta lenses to cofunctors (Lemma~\ref{lemma:right-adjoint}), 
and that the coalgebras for the comonad generated from this adjunction are
delta lenses (Theorem~\ref{thm:main}).   

In 2020, a diagrammatic characterisation of delta lenses was 
introduced by the current author \cite{Cla20}, building upon 
an earlier characterisation of cofunctors as spans \cite{HM93}.
This diagrammatic approach is utilised throughout this paper, 
and leads to another simple characterisation of delta lenses 
(Proposition~\ref{prop:delta-lens}). 

\subsection*{Overview of the paper and related work}

This section provides an informal overview of the paper, together 
with further commentary on the background, and references to related 
work. 
The goal is to provide a conceptual understanding of the results;
later sections will be dedicated to the formal mathematics.

Section~\ref{sec:background} contains the mathematical background 
required for the main results, which are presented in 
Section~\ref{sec:main-result}. 
Consequences of the main result and concluding remarks are in 
Section~\ref{sec:conclusion}.

Throughout the paper we make the assumption that a \emph{system}, 
whatever that may be, can be understood as a category.
The objects of this category are the \emph{states} of the system, 
while the morphisms are the \emph{transitions} (or \emph{deltas}) 
between system states. 

Delta lenses were introduced in \cite[Definition~4]{DXC11} to 
model bidirectional transformations between systems when they are 
understood as categories. 
The \textsc{Get} of a delta lens is a functor 
$f \colon A \rightarrow B$ from the \emph{source category} $A$ to 
the \emph{view category} $B$, while the \textsc{Put} is a certain kind of 
function (that this paper calls a \emph{lifting operation}) 
satisfying axioms analogous to the classical lens laws. 
A slightly modified definition of delta lens appeared in 
\cite[Definition~1]{JR13}, however this definition
still seemed to be ad hoc, and made it difficult to prove deep results
without checking many details.

The definition of delta lens (Definition~\ref{defn:delta-lens}) 
given in this paper is based on a diagrammatic characterisation 
which first appeared in \cite[Corollary~20]{Cla20}, by 
representing the \textsc{Put} in terms of 
bijective-on-objects functors (Definition~\ref{defn:bijective-on-objects})
and discrete opfibrations (Definition~\ref{defn:discrete-opfibration}). 
This diagrammatic approach provides a natural framework for 
studying delta lenses using category theory, and has the benefit of 
allowing for very simple (albeit more abstract) proofs. 
This approach will be utilised throughout this paper, 
although in many places we will also include explicit 
descriptions of constructions using the traditional definition of a 
delta lens. 

A key idea presented in \cite{AU17, Cla20} is that the 
\textsc{Get} and \textsc{Put} of a delta lens can be separated into 
functors and \emph{cofunctors} (Definition~\ref{defn:cofunctor}),
respectively. 
Intuitively, a cofunctor can be understood as a delta lens without 
any information on how the \textsc{Get} acts on morphisms; it is 
the minimum amount of structure needed to specify a \textsc{Put}
operation between categories. 
It was shown in the paper \cite{AU17} that delta lenses are cofunctors 
with additional structure. 
In this paper, we aim to show that said 
structure arises coalgebraically via a comonad. 

Both delta lenses and cofunctors are predominantly understood and 
studied as \emph{morphisms} between categories, however to prove that 
delta lenses are cofunctors equipped with coalgebraic structure, 
it is necessary for them to be understood as \emph{objects}. 
Therefore this paper introduces a new category $\Cof(B)$, whose 
objects are cofunctors into a fixed category $B$ 
(Definition~\ref{defn:category-cofunctors}). 
The category $\Lens(B)$, whose objects are delta lenses into a 
fixed category $B$, was previously studied in \cite{JR17, Cla20b}.
Surprisingly, we show that the category $\Lens(B)$ can be defined 
(Definition~\ref{defn:delta-lens-category}) as the slice category 
$\Cof(B) / 1_{B}$.  
Not only does this provide a new characterisation of delta lenses
in term of cofunctors (Proposition~\ref{prop:delta-lens}), 
but also provides the insight that the canonical forgetful functor 
$L \colon \Lens(B) \rightarrow \Cof(B)$, which takes a delta lens to its 
underlying \textsc{Put} cofunctor, is a projection from a 
slice category. 

Finally, proving that delta lenses are coalgebras for a comonad on 
$\Cof(B)$ amounts to showing that the forgetful functor 
$L \colon \Lens(B) \rightarrow \Cof(B)$ is \emph{comonadic}
(Theorem~\ref{thm:main}). 
A necessary condition is that $L$ has a right adjoint $R$ 
(Lemma~\ref{lemma:right-adjoint}), which 
constructs the \emph{cofree delta lens} from each cofunctor in $\Cof(B)$. 
This construction first appeared explicitly in 
\cite[Section~3.2]{AU16}, however it was not obviously a right adjoint
--- or even a functor --- and it was disconnected from 
the context of cofunctors and delta lenses. 
Both Lemma~\ref{lemma:right-adjoint} and 
Theorem~\ref{thm:main} admit straightforward proofs, with the 
benefit of the diagrammatic approach to cofunctors and delta lenses. 

\subsection*{Notation and conventions}

This section outlines some of the notation and conventions used in the 
paper. 
Given a category~$A$, its underlying set (or discrete category) of 
objects is denoted~$A_{0}$. 
Given a functor $f \colon A \rightarrow B$, its underlying object 
assignment is denoted $f_{0} \colon A_{0} \rightarrow B_{0}$. 
Similarly, a cofunctor $\varphi \colon A \nrightarrow B$ will have 
an underlying object assignment $\varphi_{0} \colon A_{0} \rightarrow B_{0}$. 
Thus the orientation of a cofunctor agrees with the orientation of its
underlying object assignment (this convention is chosen to agree 
with the orientation of delta lenses, however this choice is not uniform 
in the literature on cofunctors). 
The operation $\cod$ sends each morphism to its \emph{codomain} or 
\emph{target} object.

\section{Prerequisites for the main result}
\label{sec:background}

We first recall two special classes of functors, which 
we will use as the building blocks for defining cofunctors and 
delta lenses. 
New contributions in this section include the category $\Cof(B)$ 
whose objects are cofunctors (Definition~\ref{defn:category-cofunctors}), 
and the characterisation of delta lenses as certain morphisms therein
(Proposition~\ref{prop:delta-lens}). 

\begin{definition}\label{defn:bijective-on-objects}
A functor $f \colon A \rightarrow B$ is \emph{bijective-on-objects} if its 
underlying object assignment $f_{0} \colon A_{0} \rightarrow B_{0}$ is a bijection. 
\end{definition}

\begin{definition}\label{defn:discrete-opfibration}
A functor $f \colon A \rightarrow B$ is a \emph{discrete opfibration} if for all pairs,
\[
	(a \in A, \, u \colon fa \rightarrow b \in B)
\]
there exists a unique morphism 
$w \colon a \rightarrow a'$ in $A$ such that $fw = u$. 
\end{definition}

\begin{definition}\label{defn:cofunctor}
A \emph{cofunctor} $\varphi \colon A \nrightarrow B$ between categories is a 
span of functors, 
\begin{equation}
\begin{tikzcd}[column sep = small]
& X 
\arrow[ld, "\varphi"']
\arrow[rd, "\phibar"]
& 
\\
A & & B
\end{tikzcd}
\end{equation}
where $\varphi$ is a bijective-on-objects functor and $\phibar$ is a discrete opfibration. 
\end{definition}

Alternatively, a cofunctor $\varphi \colon A \nrightarrow B$ consists of a
function $\varphi_{0} \colon A_{0} \rightarrow B_{0}$, 
together with a \emph{lifting operation} $\varphi$, which assigns each pair
$(a \in A, \, u \colon \varphi_{0}a \rightarrow b \in B)$ to a morphism 
$\varphi(a, u) \colon a \rightarrow~a'$ in $A$, such that the following axioms are satisfied: 
\begin{enumerate}[(1)]
\item $\varphi_{0}\cod \big( \varphi(a, u) \big) = \cod(u)$;
\item $\varphi(a, 1_{\varphi_{0}a}) = 1_{a}$;
\item $\varphi(a, v \circ u) = \varphi(a', v) \circ \varphi(a, u)$, 
where $a' = \cod\big( \varphi(a, u) \big)$.
\end{enumerate}

\begin{definition}\label{defn:delta-lens}
A \emph{delta lens} $(f, \varphi) \colon A \rightleftharpoons B$ between categories is 
a commutative diagram of functors, 
\begin{equation}\label{eqn:delta-lens}
\begin{tikzcd}[column sep = small]
& X 
\arrow[ld, "\varphi"']
\arrow[rd, "\phibar"]
& 
\\
A 
\arrow[rr, "f"']
& & B
\end{tikzcd}
\end{equation}
where $\varphi$ is a bijective-on-objects functor and $\phibar$ is a discrete opfibration. 
\end{definition}

We can also describe a delta lens $(f, \varphi) \colon A \rightleftharpoons B$ as 
consisting of a functor $f \colon A \rightarrow B$ together with a \emph{lifting operation}
$\varphi$, which assigns each pair
$(a \in A, \, u \colon fa \rightarrow b \in B)$ to a morphism 
$\varphi(a, u) \colon a \rightarrow a'$ in $A$, such that the following axioms are satisfied: 
\begin{enumerate}[(1)]
\item $f \varphi(a, u) = u$;
\item $\varphi(a, 1_{fa}) = 1_{a}$;
\item $\varphi(a, v \circ u) = \varphi(a', v) \circ \varphi(a, u)$,
where $a' = \cod\big( \varphi(a, u) \big)$.
\end{enumerate}

Every delta lens $(f, \varphi) \colon A \rightleftharpoons B$ has an underlying 
functor $f \colon A \rightarrow B$ and an underlying cofunctor $\varphi \colon A \nrightarrow B$,
and their corresponding underlying object assignments are equal; that is, $f_{0} = \varphi_{0}$. 

\begin{definition}\label{defn:category-cofunctors}
For each category $B$, there is a category $\Cof(B)$ of \emph{cofunctors over the base $B$} 
whose objects are cofunctors with codomain $B$, and whose morphisms are given by commutative 
diagrams of functors of the form:
\begin{equation}\label{diagram:morphism-cofunctors}
\begin{tikzcd}[column sep = small]
A 
\arrow[rr, "h"]
& & C
\\
X
\arrow[u, "\varphi"]
\arrow[rr, "\overline{h}"]
\arrow[rd, "\phibar"']
& & Y 
\arrow[u, "\gamma"']
\arrow[ld, "\overline{\gamma}"]
\\
& B &
\end{tikzcd}
\end{equation}
\end{definition}

Equivalently, a morphism in $\Cof(B)$ from a cofunctor $\varphi \colon A \nrightarrow B$ 
to a cofunctor $\gamma \colon C \nrightarrow B$ consists of a functor 
$h \colon A \rightarrow C$ such that $\gamma_{0}ha = \varphi_{0}a$ for all $a \in A$,
and $h\varphi(a, u) = \gamma(ha, u)$ for all pairs 
$(a \in A, \, u \colon \varphi_{0}a \rightarrow b \in B)$.
The functor $\overline{h} \colon X \rightarrow Y$ is then uniquely induced 
from this data. 
Intuitively, if $A$ and $C$ are understood as \emph{source categories} with a fixed
\emph{view category} $B$, then the morphisms in $\Cof(B)$ are functors between the source 
categories which preserve the chosen lifts, given by the 
corresponding cofunctors, from the view category.

\begin{proposition}\label{prop:delta-lens}
Every delta lens $(f, \varphi) \colon A \rightleftharpoons B$ is equivalent to a morphism 
in $\Cof(B)$ whose codomain is the trivial cofunctor on $B$. 
\end{proposition}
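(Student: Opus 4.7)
The plan is to unpack the definition of a morphism in $\Cof(B)$ whose codomain is the trivial cofunctor, and observe that the resulting data is precisely that of a delta lens; the proof will reduce to a direct comparison of diagrams.

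First, I would identify the \emph{trivial cofunctor on $B$} as the identity $1_{B} \colon B \nrightarrow B$, given by the span $B \xleftarrow{1_{B}} B \xrightarrow{1_{B}} B$. The identity functor is visibly bijective-on-objects and a discrete opfibration, so this is a well-defined object of $\Cof(B)$. Next, given a cofunctor $\varphi \colon A \nrightarrow B$ represented by the span $A \xleftarrow{\varphi} X \xrightarrow{\phibar} B$, I would instantiate the diagram~\eqref{diagram:morphism-cofunctors} of Definition~\ref{defn:category-cofunctors} with the codomain cofunctor $(\gamma, \overline{\gamma}) = (1_{B}, 1_{B})$. The lower triangle then forces $\overline{h} = \phibar$, while the upper square collapses to the single equation $h\varphi = \phibar$. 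Thus such a morphism amounts to a functor $h \colon A \rightarrow B$ making the triangle of Definition~\ref{defn:delta-lens} commute with $f = h$, which is exactly a delta lens.

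To establish the equivalence, I would check that the correspondence is a bijection: conversely, any delta lens $(f, \varphi)$ yields such a morphism by setting $h = f$ and $\overline{h} = \phibar$, and the remark following Definition~\ref{defn:category-cofunctors} confirms that $\overline{h}$ is uniquely induced from the remaining data. There is no substantive obstacle here---the only subtlety is confirming that $\overline{h}$ is determined rather than extra structure, but this follows immediately from the lower triangle forcing $\overline{h} = \phibar$.
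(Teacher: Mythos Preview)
Your proposal is correct and follows essentially the same approach as the paper: both arguments instantiate the morphism diagram~\eqref{diagram:morphism-cofunctors} with the trivial cofunctor as codomain and observe that the resulting commutative square is precisely the delta lens triangle of Definition~\ref{defn:delta-lens}. Your version is in fact more explicit than the paper's, spelling out why $\overline{h} = \phibar$ is forced by the lower triangle and why no additional data is hidden in the correspondence.
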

\begin{proof}
Consider the morphism in $\Cof(B)$ given by the commutative diagram of functors: 
\begin{equation}\label{diagram:morphism-trivial-cofunctor}
\begin{tikzcd}[column sep = small]
A 
\arrow[rr, "f"]
& & B
\\
X
\arrow[u, "\varphi"]
\arrow[rr, "\phibar"]
\arrow[rd, "\phibar"']
& & B 
\arrow[u, "1_{B}"']
\arrow[ld, "1_{B}"]
\\
& B &
\end{tikzcd}
\end{equation}
The upper commutative square describes a delta lens as given in Definition~\ref{defn:delta-lens}.
Conversely, every delta lens may be depicted as a morphism in $\Cof(B)$ in this way.
\end{proof}

We can unpack \eqref{diagram:morphism-trivial-cofunctor} using the explicit 
characterisation of morphisms in $\Cof(B)$ to obtain the precise difference 
between cofunctors and delta lenses, in terms of objects and morphisms. 
Namely, the diagram \eqref{diagram:morphism-trivial-cofunctor} states that a delta 
lens corresponds to a cofunctor $\varphi \colon A \nrightarrow B$ together with 
a functor $f \colon A \rightarrow B$ such that $fa = \varphi_{0}a$ for all $a \in A$,
and $f\varphi(a, u) = u$ for all pairs $(a \in A, \, u \colon fa \rightarrow b \in B)$.

\begin{definition}
\label{defn:delta-lens-category}
For each category $B$, we define the category of \emph{delta lenses over the base $B$} to be the
slice category $\Lens(B) \coloneqq \Cof(B) \, / \, 1_{B}$, 
where $1_{B}$ is the trivial cofunctor on $B$. 
\end{definition}

By Proposition~\ref{prop:delta-lens}, the objects of $\Lens(B)$ are delta lenses with codomain 
$B$, represented as a morphism into the trivial cofunctor as shown in 
\eqref{diagram:morphism-trivial-cofunctor}. 
The morphisms in $\Lens(B)$ are given by morphisms \eqref{diagram:morphism-cofunctors} 
in $\Cof(B)$ such that the following pasting condition holds: 
\begin{equation}\label{diagram:morphism-delta-lens}
\begin{tikzcd}[column sep = 2em]
A 
\arrow[rr, "h"]
& & C
\arrow[rr, "g"]
& & B
\\
X
\arrow[u, "\varphi"]
\arrow[rr, "\overline{h}"]
\arrow[rrd, "\phibar"']
& & Y 
\arrow[u, "\gamma"']
\arrow[d, "\overline{\gamma}"]
\arrow[rr, "\overline{\gamma}"]
& & B
\arrow[u, "1_{B}"']
\arrow[lld, "1_{B}"]
\\
& & B & & 
\end{tikzcd}
\qquad = \qquad
\begin{tikzcd}[column sep = small]
A 
\arrow[rr, "f"]
& & B
\\
X
\arrow[u, "\varphi"]
\arrow[rr, "\phibar"]
\arrow[rd, "\phibar"'] 
& & B 
\arrow[u, "1_{B}"']
\arrow[ld, "1_{B}"]
\\
& B &
\end{tikzcd}
\end{equation}
In other words, the only additional requirement on a morphism $h \colon A \rightarrow C$ 
between delta lenses over $B$, compared to a morphism between cofunctors over $B$, 
is that $g \circ h = f$. 
This is opposed to just requiring $\gamma_{0}ha = \varphi_{0}a$
on objects (where recall for delta lenses, the underlying object assignments for the 
functor and cofunctor are equal, that is, 
$g_{0} = \gamma_{0}$ and $f_{0} = \varphi_{0}$). 

There is a canonical forgetful functor,
\begin{equation*}
	L \colon \Lens(B) \longrightarrow \Cof(B)
\end{equation*}
which assigns every delta lens to its underlying cofunctor. 
This forgetful functor is the focus of the main result in the following section. 

\section{Main result}
\label{sec:main-result}

While not every cofunctor may be given the structure of a delta lens, 
Ahman and Uustalu \cite{AU16} developed a method which constructs a 
delta lens from any cofunctor.
To understand their construction, first recall that the underlying objects functor 
$(-)_{0} \colon \Cat \rightarrow \Set$ has a right adjoint 
$(\widehat{-}) \colon \Set \rightarrow \Cat$ which takes each set $X$ 
to the \emph{codiscrete category} $\widehat{X}$.

Given a cofunctor $\varphi \colon A \nrightarrow B$ with underlying object assignment 
$\varphi_{0} \colon A_{0} \rightarrow B_{0}$, we may construct the following pullback 
in $\Cat$: 
\begin{equation}
\begin{tikzcd}[row sep = small]
& P
\arrow[ld, "\pi_{A}"']
\arrow[rd, "\pi_{B}"]
\arrow[dd, phantom, "\lrcorner" rotate = -45, very near start]
& \\
A 
\arrow[rd, "\widehat{\varphi}_{0} \, \circ \, \eta_{A}"']
& & B
\arrow[ld, "\eta_{B}"]
\\
& \widehat{B}_{0}
&
\end{tikzcd}
\end{equation} 
Here $\eta_{B} \colon B \rightarrow \widehat{B}_{0}$ is the component of the unit 
for the adjunction at $B$, and $\widehat{\varphi}_{0} \circ \eta_{A}$ the component of 
the unit at $A$ followed by image of $\varphi_{0}$ under the right adjoint. 
Using the universal property of the pullback, we have the following: 
\begin{equation}\label{diagram:right-adjoint}
\begin{tikzcd}[row sep = small]
& X 
\arrow[ldd, bend right, "\varphi"']
\arrow[rdd, bend left, "\phibar"]
\arrow[d, dashed, "{\langle \varphi, \phibar \rangle}"]
& 
\\[+1em]
& P
\arrow[ld, "\pi_{A}"']
\arrow[rd, "\pi_{B}"]
\arrow[dd, phantom, "\lrcorner" rotate = -45, very near start]
& \\
A 
\arrow[rd, "\widehat{\varphi}_{0} \, \circ \, \eta_{A}"']
& & B
\arrow[ld, "\eta_{B}"]
\\
& \widehat{B}_{0}
&
\end{tikzcd}
\end{equation}
Since $\eta_{B}$ is bijective-on-objects, the projection functor $\pi_{A}$ is also
bijective-on-objects which, together with the functor $\varphi$, implies that
$\langle \varphi, \phibar \rangle \colon X \rightarrow P$ is bijective-on-objects,
due to the properties of bijections at the level of objects.
Thus, the upper right triangle in \eqref{diagram:right-adjoint} defines a delta lens 
$P \rightleftharpoons B$. 

The category $P$ has the same objects as $A$, but morphisms $a \rightarrow a'$ in $P$ 
are given by pairs of 
the form $(w \colon a \rightarrow a' \in A, u \colon \varphi_{0}a \rightarrow \varphi_{0}a' \in B)$. 
The functor $\pi_{B} \colon P \rightarrow B$ projects to the second arrow in this 
pair. 
The lifting operation which makes this functor into a delta lens is induced by the 
lifting operation of the original cofunctor; it takes an object $a \in P$ and 
a morphism $u \colon \varphi_{0}a \rightarrow b \in B$ to the morphism 
$\big( \varphi(a, u) \colon a \rightarrow a', u \colon \varphi_{0}a \rightarrow b \big)$ in 
$P$. 

We now show that this construction due to Ahman and Uustalu is universal, in the 
sense that it provides a right adjoint to the functor taking a delta lens to its underlying
cofunctor. 

\begin{lemma}\label{lemma:right-adjoint}
The forgetful functor $L \colon \Lens(B) \rightarrow \Cof(B)$ has a right adjoint.
\end{lemma}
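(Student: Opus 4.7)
The plan is to define the right adjoint $R$ on objects via the pullback construction already given in \eqref{diagram:right-adjoint}, to extend $R$ to morphisms using the universal property of the pullback, and then to verify the adjunction by exhibiting a natural bijection
\[
\Cof(B)\bigl(L\ell, \gamma\bigr) \;\cong\; \Lens(B)\bigl(\ell, R\gamma\bigr).
\]

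On a cofunctor $\gamma \colon C \nrightarrow B$, I would set $R\gamma$ to be the delta lens $\pi_{B} \colon P \rightleftharpoons B$ constructed in \eqref{diagram:right-adjoint}, with the lifting operation $(w, u) \mapsto (\gamma(a, u), u)$ induced by $\gamma$ as described in the text following that diagram. For a morphism $h \colon \gamma \to \gamma'$ in $\Cof(B)$, the universal property of the pullback $P'$ applied to $h \circ \pi_{C}$ and $\pi_{B}$ induces a unique functor $Rh \colon P \to P'$; the triangle $\pi_{B}' \circ Rh = \pi_{B}$ is the slice condition making $Rh$ a morphism in $\Lens(B)$, and functoriality of $R$ follows from uniqueness in the universal property.

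The core of the proof is the natural bijection. A morphism $m \colon \ell \to R\gamma$ in $\Lens(B)$, starting from $\ell = (f, \varphi) \colon A \rightleftharpoons B$, is a functor $m \colon A \to P$ satisfying two conditions: (i) it preserves chosen lifts, making it a morphism in $\Cof(B)$; and (ii) $\pi_{B} \circ m = f$, the slice condition over $1_{B}$. Because $\widehat{B}_{0}$ is codiscrete, the universal property of the pullback translates ``$m \colon A \to P$ with $\pi_{B} \circ m = f$'' into a single functor $h := \pi_{C} \circ m \colon A \to C$ satisfying $\gamma_{0} \circ h_{0} = f_{0} = \varphi_{0}$; compatibility on morphisms in $\widehat{B}_{0}$ is automatic. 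Unpacking the lift $(\gamma(ha, u), u)$ in $P$, condition (i) decomposes into two coordinates: the first reads $h\varphi(a, u) = \gamma(ha, u)$, which is exactly the requirement that $h$ be a morphism $\varphi \to \gamma$ in $\Cof(B)$; the second reads $f\varphi(a, u) = u$, which is automatic by the delta lens axiom for $\ell$. This yields the claimed bijection, and naturality in both variables is immediate from uniqueness of pullback-induced morphisms.

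The main obstacle is less a substantive difficulty than a matter of bookkeeping: one must line up three conditions — the slice condition over $f$, preservation of the lift of $\gamma$, and the delta lens law for $\ell$ — against the two coordinate projections out of the pullback $P$, and show that one of them is automatic on each side while the others match exactly. The diagrammatic characterisation of a delta lens as a span with a bijective-on-objects leg and a discrete opfibration leg is precisely what makes this matching transparent, and it is what allows the argument to proceed without explicit manipulation of lifting operations.
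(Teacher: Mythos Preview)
Your argument is correct and uses the same construction of $R$ as the paper, but you verify the adjunction via the hom-set bijection rather than by exhibiting the unit and counit. The paper instead writes down the counit at a cofunctor $\gamma$ as the projection $\pi_{C} \colon P \to C$ and the unit at a delta lens $(f,\varphi)$ as $\langle 1_{A}, f\rangle \colon A \to P$, and then appeals to the triangle identities. The two routes are interchangeable; your unpacking of the bijection makes the matching between the slice condition, lift-preservation, and the delta lens axiom more transparent, while the paper's unit/counit description makes the factorisation of a delta lens through the cofree lens (used later in Section~\ref{sec:conclusion}) immediately visible.

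One small point to tighten: when you define $R$ on morphisms, the equality $\pi_{B}' \circ Rh = \pi_{B}$ is only the \emph{slice} condition in $\Lens(B)$; you still need $Rh$ to be a morphism in $\Cof(B)$, i.e.\ to preserve chosen lifts. This does hold, since the lift in $P$ at $(c,u)$ is $(\gamma(c,u),u)$ and $h$ already satisfies $h\gamma(c,u)=\gamma'(hc,u)$, but it is a check you have elided rather than one that is covered by the pullback triangle alone.
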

\begin{proof}
Using the construction in \eqref{diagram:right-adjoint}, define the functor 
$R \colon \Cof(B) \rightarrow \Lens(B)$ by the assignment: 
\begin{equation}
\begin{tikzcd}[column sep = small]
& X 
\arrow[ld, "\varphi"']
\arrow[rd, "\phibar"]
& 
\\
A & & B
\end{tikzcd}
\qquad \qquad \longmapsto \qquad \qquad
\begin{tikzcd}[column sep = small]
& X 
\arrow[ld, "{\langle \varphi, \phibar \rangle}"']
\arrow[rd, "\phibar"]
& 
\\
P
\arrow[rr, "\pi_{B}"']
& & B
\end{tikzcd}
\end{equation}
We describe the components of the unit and counit for the adjunction $L \dashv R$ 
and omit the detailed checks that the triangle identities hold. 

Given a cofunctor $\varphi \colon A \nrightarrow B$ the component of the counit is 
given by: 
\begin{equation}
\begin{tikzcd}[column sep = small]
P 
\arrow[rr, "\pi_{A}"]
& & A
\\
X
\arrow[u, "{\langle \varphi, \phibar \rangle}"]
\arrow[rr, equal]
\arrow[rd, "\phibar"']
& & X 
\arrow[u, "\varphi"']
\arrow[ld, "\phibar"]
\\
& B &
\end{tikzcd}
\end{equation}

Given a delta lens $(f, \varphi) \colon A \rightleftharpoons B$ the component of
the unit is given by: 
\begin{equation}
\label{eqn:unit}
\begin{tikzcd}[column sep = 2em]
A 
\arrow[rr, "{\langle 1_{A}, f \rangle}"]
& & P
\arrow[rr, "\pi_{B}"]
& & B
\\
X
\arrow[u, "\varphi"]
\arrow[rr, equal]
\arrow[rrd, "\phibar"']
& & X 
\arrow[u, "{\langle \varphi, \phibar \rangle}"']
\arrow[d, "\phibar"]
\arrow[rr, "\phibar"]
& & B
\arrow[u, "1_{B}"']
\arrow[lld, "1_{B}"]
\\
& & B & & 
\end{tikzcd}
\qquad = \qquad
\begin{tikzcd}[column sep = small]
A 
\arrow[rr, "f"]
& & B
\\
X
\arrow[u, "\varphi"]
\arrow[rr, "\phibar"]
\arrow[rd, "\phibar"'] 
& & B 
\arrow[u, "1_{B}"']
\arrow[ld, "1_{B}"]
\\
& B &
\end{tikzcd}
\end{equation}
The above diagrams show that the pasting condition required in 
\eqref{diagram:morphism-delta-lens} is satisfied. 
\end{proof}

\begin{theorem}\label{thm:main}
The forgetful functor $L \colon \Lens(B) \rightarrow \Cof(B)$ is comonadic. 
\end{theorem}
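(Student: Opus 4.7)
The plan is to apply the dual of Beck's monadicity theorem to the adjunction $L \dashv R$ constructed in Lemma~\ref{lemma:right-adjoint}. Beyond the existence of the right adjoint, it remains to verify two conditions: that $L$ reflects isomorphisms, and that $\Lens(B)$ admits equalizers of $L$-split parallel pairs which $L$ preserves.

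Both conditions follow immediately from the observation that, by Definition~\ref{defn:delta-lens-category}, the functor $L$ is precisely the projection from the slice category $\Cof(B)/1_{B}$ onto $\Cof(B)$. For the first condition, suppose $h \colon (f, \varphi) \to (g, \gamma)$ is a morphism of delta lenses and that $Lh$ admits an inverse $k$ in $\Cof(B)$. Composing the pasting condition $g \circ h = f$ on the right with $k$ yields $g = f \circ k$, which is precisely the pasting condition \eqref{diagram:morphism-delta-lens} needed for $k$ to lift to a morphism in $\Lens(B)$; hence $k$ inverts $h$ in $\Lens(B)$ as well.

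For the second condition, let $h_{1}, h_{2} \colon (f, \varphi) \to (g, \gamma)$ be a pair in $\Lens(B)$ whose image in $\Cof(B)$ admits an equalizer $e \colon \psi \to \varphi$; such an equalizer certainly exists whenever the pair is $L$-split, since split equalizers are absolute. We equip $\psi$ with a structure map to $1_{B}$ by composing $e$ with the map $\varphi \to 1_{B}$ that presents $(f, \varphi)$ as an object of $\Lens(B)$, making $e$ into a morphism of $\Lens(B)$. The universal property then transfers from $\Cof(B)$ up to $\Lens(B)$ by the same composition trick used for the first condition, so $e$ equalizes $h_{1}, h_{2}$ in $\Lens(B)$ and is manifestly preserved by $L$.

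The argument therefore reduces to standard facts about projections out of slice categories; the genuine content of the theorem is upstream, in Lemma~\ref{lemma:right-adjoint} and in the presentation of $\Lens(B)$ as the slice $\Cof(B)/1_{B}$. For this reason I do not anticipate any substantive obstacle in writing out the proof beyond bookkeeping with the pasting condition \eqref{diagram:morphism-delta-lens}.
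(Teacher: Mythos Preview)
Your argument via the (dual) Beck criterion is correct: the projection $L \colon \Cof(B)/1_{B} \to \Cof(B)$ reflects isomorphisms and creates equalizers of $L$-split pairs for exactly the reasons you give, and together with Lemma~\ref{lemma:right-adjoint} this yields comonadicity. The paper, however, does not invoke Beck at all. Instead it computes the coalgebras for the comonad $LR$ directly: a coalgebra structure map on a cofunctor $\varphi$ is a morphism $h \colon A \to P$ in $\Cof(B)$ as in \eqref{eqn:coalgebra}, and compatibility with the counit already forces $\overline{h} = 1_{X}$ and $h = \langle 1_{A}, f\rangle$ for some functor $f$ with $f \circ \varphi = \phibar$, while the comultiplication axiom is then automatic. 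Thus a coalgebra is literally the same data as a delta lens. Your route is slicker in that it reduces everything to a standard fact about slice projections and avoids inspecting the comonad, whereas the paper's explicit description has the advantage of exhibiting concretely how the coalgebra structure map \emph{is} the functor part $f$ of the delta lens, which feeds directly into the later discussion of the unit factorisation in Section~\ref{sec:conclusion}.
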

\begin{proof}
By Lemma~\ref{lemma:right-adjoint}, the functor $L$ has a right adjoint $R$. 
To prove that $L$ is comonadic, it remains to show that the category of coalgebras
for the induced comonad $LR$ on $\Cof(B)$ is equivalent to $\Lens(B)$.

Given a cofunctor $\varphi \colon A \nrightarrow B$, a coalgebra structure map is given
by a morphism in $\Cof(B)$ of the form: 
\begin{equation}\label{eqn:coalgebra}
\begin{tikzcd}[column sep = small]
A 
\arrow[rr, "h"]
& & P
\\
X
\arrow[u, "\varphi"]
\arrow[rr, "\overline{h}"]
\arrow[rd, "\phibar"']
& & X 
\arrow[u, "{\langle \varphi, \phibar \rangle}"']
\arrow[ld, "\phibar"]
\\
& B &
\end{tikzcd}
\end{equation}
However compatibility with the counit forces $\overline{h} = 1_{X}$  
and $h = \langle 1_{A}, f \rangle$, where $f \colon A \rightarrow B$ is a functor 
such that $f \circ \varphi = \phibar$. 
Compatibility with the comultiplication doesn't add any further conditions. 
Therefore, a coalgebra for the comonad $LR$ on $\Cof(B)$ is equivalent to a delta lens
$(f, \varphi) \colon A \rightleftharpoons B$.  
\end{proof}

This theorem establishes the result stated in the title of the paper, 
that delta lenses \eqref{eqn:delta-lens} 
are coalgebras \eqref{eqn:coalgebra} for a comonad. 

\section{Concluding remarks}
\label{sec:conclusion}

In this paper, the category $\Lens(B)$ of delta lenses over the base $B$ 
was characterised as the category of coalgebras for a comonad on 
the category $\Cof(B)$ of cofunctors over the base~$B$. 
This brings together recent results in the study of delta lenses and 
cofunctors. 
In particular, we have shown that the extra structure on cofunctors
given in Ahman and Uustalu's \cite{AU17} characterisation of delta lenses
is coalgebraic, and that their construction of a delta lens from cofunctor 
in the paper \cite{AU16} is precisely the cofree delta lens on a cofunctor. 
Throughout we have also shown how the abstract diagrammatic approach to 
delta lenses, first introduced in \cite{Cla20}, has led to concise 
proofs of these results, and offers a clear perspective on the relationship 
between these ideas. 

Aside from clarification and development of theory, the results presented
in this paper have several other mathematical consequences. 
For example, the functor $L \colon \Lens(B) \rightarrow \Cof(B)$ 
creates all colimits which exist in $\Cof(B)$. 
Thus we can take the coproduct of a pair of cofunctors in $\Cof(B)$, 
and automatically know how to construct the coproduct of the 
corresponding delta lenses in $\Lens(B)$. 

Another consequence from the unit \eqref{eqn:unit} of the adjunction 
between $\Cof(B)$ and $\Lens(B)$ is that every delta lens factorises 
into a bijective-on-objects functor followed by a cofree lens. 
Intuitively, this allows us to first pair every transition in the 
source category $A$ with a transition in the view category $B$ 
via the functor part $f \colon A \rightarrow B$ of the delta lens, 
\[
	w \colon a \rightarrow a' \in A 
	\qquad \longmapsto \qquad 
	(w \colon a \rightarrow a' \in A, fw \colon fa \rightarrow fa' \in B) 
\]
then consider the update propagation determined by the cofunctor 
part $\varphi \colon A \nrightarrow B$ of the delta lens. 
The cofree delta lens on a cofunctor behaves much like an analogue 
of \emph{constant complement} state-based lenses, except that the 
complement is with respect to morphisms rather than objects. 

While the main contributions of this paper are mathematical, it is 
hoped that these results also prompt new ways of understanding delta 
lenses.
For example, previously state-based lenses have been considered 
from a ``\textsc{Put}-based'' perspective \cite{PHF14, FHP15}, 
however this approach could also be adapted to the setting of delta lenses.  
Rather than starting with a \textsc{Get} functor between systems 
and then asking how we might construct a delta lens, 
we might instead start with a \textsc{Put} cofunctor and then 
ask for ways in which this can be given the structure of a delta lens. 
This shift of focus is subtle but important, 
especially in the context of the ideas in \cite{AU17}, 
as it is arguably the \textsc{Put} structure (rather than the \textsc{Get} 
structure) which is central to the study of bidirectional 
transformations and lenses. 

On an separate note, it is worth remarking on the 
similarity between the main result of this paper
and the classical result stating that very well-behaved lenses are 
coalgebras for a comonad \cite{Con11, GJ12}. 
Despite the clear analogy between them, and the inspiration that this paper 
derives from the classical result, it seems that they are unrelated 
at a mathematical level.
The classical result relies on $\Set$ being a cartesian closed 
category, and arises from the adjunction $(-) \times B \dashv [B, - ]$, 
whereas the results in this paper arise from a different adjunction, 
and don't require any aspect of cartesian closure. 

There are many questions to be explored in future work. 
For instance, it is natural to ask if $\Lens(B)$ is comonadic over other 
categories (such as $\Cat$ as was suggested by an anonymous reviewer), 
or if split opfibrations (also known as c-lenses \cite{JRW12}) are also comonadic 
over $\Cof(B)$. 
In recent work by the current author, it has been demonstrated that delta lenses 
arise as algebras for a monad on $\Cat / B$, 
providing a dual to the main result of this paper and strengthening 
the previous work of Johnson and Rosebrugh \cite{JR13}. 
Finally, given the importance of the category $\Lens(B)$ in the study 
of \emph{symmetric lenses} \cite{JR17, Cla20b}, it is also hoped that 
the coalgebraic perspective provides new insights into this area, 
and this will be the subject of further investigation. 

\subsection*{Acknowledgements}

The author would like to thank Michael Johnson for his feedback on this work,
the anonymous reviewers of this paper for their helpful comments, 
and the audience of the Bx2021 workshop for their insightful questions.
The author also thanks Eli Hazel and Giacomo Tendas for their suggestions 
which improved the final version of this paper. 
The author is grateful for the support of the Australian Government Research 
Training Program Scholarship. 

\bibliographystyle{plain}
\bibliography{lenses-as-coalgebras-arXiv.bib}

\begin{thebibliography}{10}

\bibitem{Agu97}
Marcelo Aguiar.
\newblock {\em Internal Categories and Quantum Groups}.
\newblock PhD thesis, Cornell University, August 1997.

\bibitem{ACU14}
Danel Ahman, James Chapman, and Tarmo Uustalu.
\newblock When is a container a comonad?
\newblock {\em Logical Methods in Computer Science}, 10:1--48, 2014.

\bibitem{AU16}
Danel Ahman and Tarmo Uustalu.
\newblock Directed containers as categories.
\newblock In {\em Proceedings 6th Workshop on Mathematically Structured
  Functional Programming}, volume 207 of {\em Electronic Proceedings in
  Theoretical Computer Science}, pages 89--98, 2016.

\bibitem{AU17}
Danel Ahman and Tarmo Uustalu.
\newblock Taking updates seriously.
\newblock In {\em Proceedings of the 6th International Workshop on
  Bidirectional Transformations}, volume 1827 of {\em CEUR Workshop
  Proceedings}, pages 59--73, 2017.

\bibitem{Cla20}
Bryce Clarke.
\newblock Internal lenses as functors and cofunctors.
\newblock In {\em Applied Category Theory 2019}, volume 323 of {\em Electronic
  Proceedings in Theoretical Computer Science}, pages 183--195, 2020.

\bibitem{Cla20b}
Bryce Clarke.
\newblock A diagrammatic approach to symmetric lenses.
\newblock In {\em Applied Category Theory Conference 2020}, volume 333 of {\em
  Electronic Proceedings in Theoretical Computer Science}, pages 79--91, 2021.

\bibitem{DXC11}
Zinovy Diskin, Yingfei Xiong, and Krzysztof Czarnecki.
\newblock From state- to delta-based bidirectional model transformations: the
  asymmetric case.
\newblock {\em Journal of Object Technology}, 10:1--25, 2011.

\bibitem{FHP15}
Sebastian Fischer, ZhenJiang Hu, and Hugo Pacheco.
\newblock The essence of bidirectional programming.
\newblock {\em Science China Information Sciences}, 58:1--21, 2015.

\bibitem{FGMPS07}
J.~Nathan Foster, Michael~B. Greenwald, Jonathan~T. Moore, Benjamin~C. Pierce,
  and Alan Schmitt.
\newblock Combinators for bidirectional tree transformations: A linguistic
  approach to the view-update problem.
\newblock {\em ACM Transactions on Programming Languages and Systems},
  29(3):1--65, 2007.

\bibitem{GJ12}
Jeremy Gibbons and Michael Johnson.
\newblock Relating algebraic and coalgebraic descriptions of lenses.
\newblock In {\em Proceedings of the First International Workshop on
  Bidirectional Transformations}, volume~49 of {\em Electronic Communications
  of the EASST}, pages 1--16, 2012.

\bibitem{HM93}
Philip~J. Higgins and Kirill C.~H. Mackenzie.
\newblock Duality for base-changing morphisms of vector bundles, modules, {L}ie
  algebroids and {P}oisson structures.
\newblock {\em Mathematical Proceedings of the Cambridge Philosophical
  Society}, 114:471--488, 1993.

\bibitem{JR13}
Michael Johnson and Robert Rosebrugh.
\newblock Delta lenses and opfibrations.
\newblock {\em Electronic Communications of the EASST}, 57:1--18, 2013.

\bibitem{JR17}
Michael Johnson and Robert Rosebrugh.
\newblock Universal updates for symmetric lenses.
\newblock In {\em Proceedings of the 6th International Workshop on
  Bidirectional Transformations}, volume 1827 of {\em CEUR Workshop
  Proceedings}, pages 39--53, 2017.

\bibitem{JRW10}
Michael Johnson, Robert Rosebrugh, and R.J. Wood.
\newblock Algebras and update strategies.
\newblock {\em Journal of Universal Computer Science}, 16(5):729--748, 2010.

\bibitem{JRW12}
Michael Johnson, Robert Rosebrugh, and R.J. Wood.
\newblock Lenses, fibrations and universal translations.
\newblock {\em Mathematical Structures in Computer Science}, 22:25--42, 2012.

\bibitem{Con11}
Russell O'Connor.
\newblock Functor is to lens as applicative is to biplate: Introducing
  multiplate, 2011.

\bibitem{PHF14}
Hugo Pacheco, ZhenJiang Hu, and Sebastian Fischer.
\newblock Monadic combinators for putback style bidirectional programming.
\newblock In {\em Proceedings of the ACM SIGPLAN 2014 Workshop on Partial
  Evaluation and Program Manipulation}, volume 333 of {\em PEPM '14}, pages
  39--50, 2014.

\bibitem{Str74}
Ross Street.
\newblock Fibrations and {Y}oneda's lemma in a 2-category.
\newblock In {\em Category Seminar}, volume 420 of {\em Lecture Notes in
  Mathematics}, pages 104--133, 1974.

\end{thebibliography}

\end{document}